\newcommand{\mF}{\mathcal{F}}
\newcommand{\R}{\mathbb{R}}
\newcommand{\mP}{\mathbb{P}}
\newcommand{\mE}{\mathbb{E}}
\newcommand{\md}{\,{\rm d}}
\newcommand{\s}{\sum\limits}
\newcommand{\w}{\wedge}
\newcommand{\bq}{\begin{eqnarray*}}
\newcommand{\eq}{\end{eqnarray*}}
\newcommand{\one}{1\mkern-5mu{\hbox{\rm I}}}
\theoremstyle{break}
\newtheorem{Def}{Definition}[section]
\newtheorem{Bem}[Def]{Remark}
\newtheorem{Lem}[Def]{Lemma}
\newtheorem{Satz}[Def]{Proposition}
\newtheorem{Bsp}[Def]{Example}
\newenvironment{proof}{\noindent{\textit{Proof:}}}{%
\unskip\nobreak\hfil\penalty50\hskip1em\null\nobreak
$\Box$
\parfillskip=\z@\finalhyphendemerits=0\endgraf\bigskip}
\let\oldendBsp\endBsp
\def\endBsp{\unskip\nobreak\hfil\penalty50\hskip1em\null\nobreak\hfil%
$\blacksquare$\parfillskip=\z@\finalhyphendemerits=0\endgraf\oldendBsp}
\let\oldendBem\endBem
\def\endBem{\unskip\nobreak\hfil\penalty50\hskip1em\null\nobreak\hfil%
$\blacksquare$\parfillskip=\z@\finalhyphendemerits=0\endgraf\oldendBem}
\let\oldendout\endout
\def\endout{\unskip\nobreak\hfil\penalty50\hskip1em\null\nobreak\hfil%
$\blacksquare$\parfillskip=\z@\finalhyphendemerits=0\endgraf\oldendout}
\author{\small\sc  Julia Eisenberg \footnote{email: jeisenbe@fam.tuwien.ac.at}\smallskip\\\footnotesize Institute of Mathematical Methods in Economics, Vienna University of Technology.}
\date{}
\title{Unrestricted Consumption under a Deterministic Wealth and an Ornstein-Uhlenbeck Process as a Discount Rate}
\begin{document}
\maketitle
\begin{abstract}\noindent
We consider an individual or household endowed with an initial capital and an income, modeled as a linear function of time. Assuming that the discount rate evolves as an Ornstein-Uhlenbeck process, we target to find an unrestricted consumption strategy such that the value of the expected discounted consumption is maximized.
Differently than in the case with restricted consumption rates, we can determine the optimal strategy and the value function.

\vspace{6pt}
\noindent
\\{\bf Key words:} optimal control, Hamilton--Jacobi--Bellman equation, consumption, short rate, Ornstein-Uhlenbeck process.
\settowidth\labelwidth{{\it 2010 Mathematical Subject Classification: }}%
                \par\noindent {\it 2010 Mathematical Subject Classification: }%
                \rlap{Primary}\phantom{Secondary}
                93B05\newline\null\hskip\labelwidth
                Secondary 49L20
\end{abstract}  
\section{Introduction}
A pioneer of political economy Adam Smith said ``Consumption is the sole end and purpose of all production; [...]''. In fact, one of the fundamental questions in the decision theory is how an individual (or a household) should allocate her/his consumption over time and how much of an asset is it optimal to hold. The consumption behaviour (to save or to consume) depends on various factors, but for the main part on the individual's wealth and on the asset price processes. There is a variety of models investigating the problem of optimal consumption/investment under different assumptions about the wealth and asset price processes, confer for example \cite{kar}, \cite{busch} or \cite{fleming} and references therein.

Basically, the considered individual has a choice between consuming her/his wealth or investing in an asset in order to maximise, for example, the expected utility of consumption under a finite or infinite time horizon. Of course, the future cash flows should be transferred to the present through discounting. Usually, in order to simplify the calculations, the discount rate will be chosen as a deterministic constant, making the discount rate to the preference rate of the considered individual. 

But what happens if the individual's consumption will be discounted by a stochastic process? The problem of stochastic discounting under a linear deterministic wealth process has been considered in \cite{eis1} and under a Brownian motion as a surplus process in \cite{eis}. There, it was possible to find explicit expressions for the value function and the optimal strategy if the discounting function was given by a geometric Brownian motion. In this special case, it turned out that the stochastic discounting did not change the optimal strategy significantly compared to the case with a constant preference rate. In the case, the discount rate was given by an Ornstein-Uhlenbeck process and restricted consumption rates it was shown that the value function was a viscosity solution to the problem; but neither the value function nor the optimal strategy had been found.

In the present paper, we assume that the wealth process of the considered individual is given by a linear function of time and the short rate process is given by an Ornstein-Uhlenbeck process. We target to find the optimal unrestricted consumption strategy such that the expected discounted consumption is maximised. Of course, the assumption of a deterministic wealth process is not very realistic, but it allows to get a first idea of the influence of a stochastic interest rate on the consumption behaviour. A detailed discussion of the advantages and disadvantages of a stochastic interest rate by consumption maximisation problems would be very space-consuming and goes beyond the scope of this introduction. Also, in order to avoid unnecessary repetitions, we refer to \cite{eis} and to \cite{eis1}. 

The remainder of the paper is organized as follows.
In Section 2, we formulate and motivate the conjecture that the optimal strategy is of barrier type. In Section 3, we determine the optimal barrier strategy and prove via the verification theorem that the value function is a classical solution to the Hamilton--Jacobi--Bellman equation corresponding to the problem.
The results are illustrated by an example.

\section{The Model}
Consider an individual or household with an income given by a deterministic linear function of time
\[
X_t=x+\mu t\;,
\]
$\mu>0$.
Denote further by $\{r_s\}$ an Ornstein-Uhlenbeck process
\[
r_s=re^{-a s}+\tilde b(1-e^{-a s})+\tilde\sigma e^{-as}\int_0^se^{au}\md W_u\;,
\]
where $\{W_u\}$ is a standard Brownian motion, $a,\tilde\sigma>0$, and let $U^r_s=\int_0^s r_u\md u$ with $r_0=r$.
Our target is to maximize the expected discounted consumption over all admissible strategies $C$, if the interest rate is given by $\{r_t\}$. A strategy $C$ is called admissible if $C$ is non-decreasing, adapted to the filtration $\{\mathcal F_s\}$, generated by $\{r_s\}$ and $X^C_t=X_t-C_t\ge 0$ for all $t\ge 0$.
\\
Here, we assume that the long-term mean $\tilde b$ of the process $\{r_s\}$ fulfils: $\tilde b> \frac{\tilde \sigma^2}{2a^2}$ and define 
\[
b:=\tilde b-\frac{\tilde \sigma^2}{2a^2} \quad\quad\mbox{and}\quad\quad \sigma:=\frac{\tilde \sigma}{\sqrt{2a}}\;.
\]
The return function corresponding to a strategy $C$ and the value function are defined by
\begin{align*}
&V^C(r,x)=\mE\Big[\int_0^\infty e^{-U^r_s} \md C_s|X_0=x,r_0=r\Big],\quad (r,x)\in\R\times\R_+\;,
\\& V(r,x)=\sup\limits_C V^C(r,x),\quad (r,x)\in\R\times\R_+\;.
\end{align*}
Note, that also lump sum payments are possible. The HJB equation corresponding to the problem is
\begin{equation}
\max\Big\{\mu V_x+a(\tilde b-r) V_r+\frac{\tilde \sigma^2}2 V_{rr}-r V,1-V_x\Big\}=0\;.\label{unrestricted}
\end{equation}
For the sake of convenience, we define an operator acting on sufficiently smooth functions
\begin{equation}
L(f)(r,x):=\mu f_x(r,x)+a(\tilde b-r) f_r(r,x)+\frac{\tilde \sigma^2}2 f_{rr}(r,x)-r f(r,x)\;.\label{oper}
\end{equation}
We conjecture that the optimal strategy would be of barrier type, i.e. it is optimal to consume if the short rate process exceeds some special value and to do nothing otherwise. 
Intuitively, it is clear that when starting with a negative initial discount rate, one should forego consumption, because the discounting factor $e^{-U_s^r}$ will increase at least until $\{r_t\}$ becomes positive. On the other hand, if $r_0>0$ then due to $\tilde b>0$ it could happen that $-U_s^r$ will remain negative and will keep decreasing in time. In this case it would make sense to start consuming on the maximal rate immediately. 
\\In order to find the optimal barrier we let $r^*\in\R$ be arbitrary but fixed and define
\[
\tau:=\inf\{t\ge 0:\; r_t=r^*,\; r_0=r<r^*\} \quad \mbox{and}\quad \varrho:=\inf\{t\ge 0:\; r_t=r^*,\; r_0=r>r^*\}\;
\]
and
\begin{align*}
&G(r,x):=\mE\Big[\big(x+\mu\tau+G(r^*,0))e^{-U_{\tau}^r}\Big]\quad \mbox{for $r\le r^*$},
\\&F(r,x):=x+\mE\Big[\mu\int_0^{\varrho}e^{-U_s^r}\md s+G(r^*,0)e^{-U_{\varrho}^r}\Big] \quad \mbox{for $r\ge r^*$}\;.
\end{align*}
Obviously, $G_x(r,x)=\mE\big[e^{-U_{\tau}^r}\big]$ and $F_x(r,x)=1$. 
Then, it is clear $G(r^*,x)=F(r^*,x)$ and $G_x(r^*,x)=F_x(r^*,x)$.
In order to verify whether a barrier strategy could be the optimal one, we have to investigate the properties of $G$ and $F$. Thus, we have to consider 
\begin{align*}
&\psi_1(r):=\mE[e^{-U_\tau^r}],\quad  r\in(-\infty,r^*],
\\&\psi_2(r):=\mE[\tau e^{-U_\tau^r}], \quad  r\in(-\infty,r^*],
\\&\phi_1(r):=\mE[e^{-U_\varrho^r}],\quad  r\in[r^*,\infty),
\\&\phi_2(r):=\mE\Big[\int_0^{\varrho}e^{-U_s^r}\md s\Big],\quad  r\in[r^*,\infty)\;.
\end{align*}
Similar to Shreve et al. \cite{shreve} we formulate the following lemma.
\begin{Lem}\label{lem:diff}
The functions $\psi_1$ and $\phi_1$ solve differential equation 
\begin{align}
a(\tilde b-r)f'(r)+\frac{\tilde \sigma^2}2f''(r)-rf(r)=0\;,\label{eqnr1}
\end{align}
the function $\phi_1$ solves
\begin{equation}
1+a(\tilde b-r)f'(r)+\frac{\tilde \sigma^2}2f''(r)-rf(r)=0\;.\label{eqnr2}
\end{equation}
and
$\psi_2$ solves 
\begin{equation}
\psi_1(r)+a(\tilde b-r)f'(r)+\frac{\tilde \sigma^2}2f''(r)-rf(r)=0\;.\label{eqnr5}
\end{equation}
under the boundary conditions 
\begin{align*}
&\psi_1(r^*)=1,\quad \lim\limits_{r\to-\infty}\psi_1(r)=0,
\\&\psi_2(r^*)=0,\quad \lim\limits_{r\to-\infty}\psi_2(r)=0,
\\&\phi_1(r^*)=1,\quad \lim\limits_{r\to\infty}\phi_1(r)=0,
\\&\phi_2(r^*)=0,\quad \lim\limits_{r\to\infty}\phi_2(r)=0.
\end{align*}
\end{Lem}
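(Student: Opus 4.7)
The plan is to realize each of the four functions as the value of a suitable $\mathcal F_t$-martingale obtained from the strong Markov property of $\{r_t\}$, and then to convert martingality into the stated ODE via It\^o's formula.

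For $\psi_1$, fix $r<r^*$ and consider $M_t:=e^{-U_{t\wedge\tau}^r}\psi_1(r_{t\wedge\tau})$. The strong Markov property of the time-homogeneous diffusion $\{r_t\}$ gives $M_t=\mE[e^{-U_\tau^r}\mid\mathcal F_t]$, so $M$ is a uniformly integrable martingale. Assuming $\psi_1\in C^2$ (discussed below), It\^o's formula yields
\[
dM_t=e^{-U_t^r}\bigl[a(\tilde b-r_t)\psi_1'(r_t)+\tfrac{\tilde\sigma^2}{2}\psi_1''(r_t)-r_t\psi_1(r_t)\bigr]\md t+\tilde\sigma\,e^{-U_t^r}\psi_1'(r_t)\md W_t,
\]
and martingality forces the drift to vanish on $(-\infty,r^*)$, which is exactly \eqref{eqnr1}. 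The identical computation on $(r^*,\infty)$ with $\varrho$ in place of $\tau$ and $\phi_1$ in place of $\psi_1$ shows that $\phi_1$ also satisfies \eqref{eqnr1}.

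For the inhomogeneous ODEs one enriches the martingale. For $\phi_2$ (the second occurrence of $\phi_1$ in the lemma statement appears to be a typo for $\phi_2$), the process
\[
N_t:=\int_0^{t\wedge\varrho}e^{-U_s^r}\md s+e^{-U_{t\wedge\varrho}^r}\phi_2(r_{t\wedge\varrho})
\]
is a martingale by strong Markov, and It\^o produces the extra integrand $e^{-U_t^r}$; setting the total drift to zero gives \eqref{eqnr2}. For $\psi_2$ the relevant martingale is
\[
P_t:=(t\wedge\tau)e^{-U_{t\wedge\tau}^r}\psi_1(r_{t\wedge\tau})+e^{-U_{t\wedge\tau}^r}\psi_2(r_{t\wedge\tau}),
\]
whose It\^o drift, after cancelling the $\psi_1$-operator contribution by \eqref{eqnr1}, is exactly the left-hand side of \eqref{eqnr5}.

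The conditions at $r^*$ are immediate: starting at $r^*$ makes $\tau=\varrho=0$, so $\psi_1(r^*)=\phi_1(r^*)=1$ and $\psi_2(r^*)=\phi_2(r^*)=0$. The conditions at $\pm\infty$ follow from the mean reversion of $\{r_t\}$ toward $\tilde b$: as $r\to\infty$ the integrated process $U_\varrho^r$ grows fast enough that $\phi_1,\phi_2\to 0$ by dominated convergence, and analogously $\psi_1,\psi_2\to 0$ as $r\to-\infty$. The main obstacle is the $C^2$-regularity of $\psi_1,\psi_2,\phi_1,\phi_2$ needed to apply It\^o's formula, together with the integrability required to promote the candidate local martingales to true martingales. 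I would resolve this by first constructing explicit $C^2$ solutions to each ODE with the prescribed boundary data (expressible through confluent hypergeometric functions, as in Shreve et al.\ \cite{shreve}) and then verifying, by a reverse It\^o/optional-sampling argument, that these analytic solutions coincide with the probabilistic representations.
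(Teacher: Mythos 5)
Your proposal splits into a ``necessity'' half and a ``sufficiency'' half, and it is the sufficiency half that does all the work in the end. The paper argues only in the verification direction: it takes a classical solution $f$ of the relevant ODE with the stated boundary behaviour, applies It\^o's formula to $e^{-U^r_{\tau\w t}}f(r_{\tau\w t})$ so that the drift term vanishes identically (because $f$ solves the equation), kills the stochastic integral using boundedness of the integrand on the relevant half-line, and lets $t\to\infty$ to identify $f$ with $\psi_1$ (similarly for $\phi_1$, $\phi_2$; for $\psi_2$ the paper does not use your martingale $P_t$ but a Markov-property computation showing $\mE\big[\int_0^{\tau\w t}e^{-U^r_s}\psi_1(r_s)\md s\big]=\mE\big[(\tau\w t)e^{-U^r_\tau}\big]$, which encodes the same idea). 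Your necessity direction --- composing the unknown functions with the process, invoking the Markov property to obtain uniformly integrable martingales $M_t$, $N_t$, $P_t$, and reading the ODEs off the vanishing drift --- is correct as far as it goes, and your three martingales are the right ones; but, as you note, it needs $C^2$ regularity of $\psi_1,\psi_2,\phi_2$ a priori, and the remedy you propose (construct classical solutions with the prescribed boundary data and identify them with the probabilistic representations ``by a reverse It\^o/optional-sampling argument'') is verbatim the paper's proof. So once your fix is carried out, the martingale half of your write-up is redundant. You did correctly spot that the second occurrence of $\phi_1$ in the statement is a typo for $\phi_2$.

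The genuine defect is your justification of the boundary conditions at $-\infty$. The claim that ``analogously $\psi_1,\psi_2\to 0$ as $r\to-\infty$'' is not analogous to the $+\infty$ case and the underlying dominated-convergence argument would fail: when $r$ is very negative, the rate stays negative for a time of order $\frac1a\log|r|$ before reaching $r^*$, so $U^r_\tau$ is very negative and the discount factor $e^{-U^r_\tau}$ explodes rather than vanishes. Concretely, $\psi_1(r)=e^{-(r-r^*)/a}\,\mE_Q\big[e^{-b\tau}\big]$, where the prefactor grows exponentially in $|r|$ while the $Q$-Laplace transform of the hitting time decays only polynomially (hitting times of an Ornstein--Uhlenbeck process grow logarithmically in the starting distance); consistently, Section 3 of the paper establishes $\psi_1(r)>1$ for all $r<r^*$. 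So this step of your argument is not merely unproved but false as stated. (The same boundary conditions appear in the paper's lemma, but the paper's verification direction never has to verify them for $\psi_1,\psi_2$: it only assumes a classical solution with that behaviour and shows it must coincide with the probabilistic function; the decay condition is really the natural one for the transformed function $h(r)=e^{(r-r^*)/a}\psi_1(r)=\mE_Q\big[e^{-b\tau}\big]$, which does vanish at $-\infty$.) If you keep your necessity approach, you must replace the $-\infty$ conditions for $\psi_1,\psi_2$ by the correct decay statement under the measure $Q$, or drop them and characterize the solution branch differently.
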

\begin{proof}
We prove the statement for $\psi_1$, the proof for $\phi_1$ follows with the same techniques.
\\$\bullet$ Let $f$ be a solution to Equation \eqref{eqnr1} with $f(r^*)=1$, $\lim\limits_{r\to-\infty} f(r)=0$. Then, by Ito's lemma
\begin{align*}
e^{-U_{\tau\w t}}f(r_{\tau\w t})&=f(r)+\int_0^{\tau\w t} e^{-U_s^r}\big\{a(\tilde b-r_s)f'(r_s)+\frac{\tilde\sigma^2}2f''(r_s)-r_sf(r_s)\big\}\md s
\\&\quad{}+\tilde\sigma\int_0^{\tau\w t} e^{-U_s^r}f'(r_s)\md W_s
\\&=f(r)+\tilde\sigma\int_0^{\tau\w t} e^{-U_s^r}f'(r_s)\md W_s\;.
\end{align*}
The integrand of the stochastic integral is bounded for $r_s\in(-\infty,r^*]$ so that the expectation of the stochastic integral equals zero, giving $f(r)=\mE\big[e^{-U^r_{\tau\w t}}f(r_{\tau\w t})\big]$.
Letting now $t\to\infty$ and noting that by Lebesgue's dominated convergence theorem limit and integration can be interchanged, we obtain
\[
f(r)=f(r^*)\mE\big[e^{-U^r_{\tau}}\big]=\mE\big[e^{-U^r_{\tau}}\big]\;.
\]
$\bullet$ Let now $f(r)$ solve Equation \eqref{eqnr2} with the boundary conditions $\lim\limits_{r\to\infty}f(r)=0$ and $f(r^*)=0$. Ito's formula yields
\begin{align*}
e^{-U^r_{\varrho\w t}}f(r_{\varrho\w t})=f(r)-\int_0^{\varrho\w t} e^{-U_s^r}\md s+\tilde\sigma\int_0^{\varrho\w t} e^{-U_s^r}f'(r_s)\md W_s, 
\end{align*}
giving $f(r)=\mE\big[e^{-U_{\varrho\w t}}f(r_{\varrho\w t})+\int_0^{\varrho\w t} e^{-U_s^r}\md s\big]$. Let now $t\to\infty$. Again by Lebesgue's dominated convergence theorem, we obtain
\[
f(r)=\mE\Big[\int_0^{\varrho} e^{-U_s^r}\md s\Big]\;.
\]
$\bullet$ Assume, $f$ solves Equation \eqref{eqnr5} with the boundary conditions $\lim\limits_{r\to-\infty}f(r)=0$ and $f(r^*)=0$. By Ito's formula
\begin{align*}
e^{-U^r_{\tau\w t}}f(r_{\varrho\w t})=f(r)-\int_0^{\tau\w t} e^{-U_s^r}\psi_1(r_s)\md s+\tilde\sigma\int_0^{\tau\w t} e^{-U_s^r}f'(r_s)\md W_s. 
\end{align*}
The expectation of the stochastic integral is equal to zero. For the first integral on the right side of the above equation, one gets due to the Markov property of $\{r_t\}$:
\begin{align*}
\mE\Big[\int_0^{\tau\w t} e^{-U_s^r}\psi_1(r_s)\md s\Big]&=\int_0^\infty\mE\Big[\one_{[s\le\tau\w t]} e^{-U_s^r}\psi_1(r_s)\Big]\md s
\\&=\int_0^\infty\mE\Big[\one_{[s\le\tau\w t]} \mE\big[e^{-U_\tau^r}|r_s\big]\Big]\md s
\\&=\int_0^\infty\mE\Big[ \mE\big[\one_{[s\le\tau\w t]} e^{-U_\tau^r}|\mF_s\big]\Big]\md s=\mE\big[(\tau\w t) e^{-U_\tau^r}\big]\;.
\end{align*}
Letting $t\to\infty$ and using Lebesgue's dominated convergence theorem yields the desired result.
\end{proof}
\begin{Bem}
Lemma \ref{lem:diff} implies that the functions $F$ and $G$ are twice continuously differentiable with respect to $r$, once continuously differentiable with respect to $x$ on $(r^*,\infty)\times \R_+$ and on $(-\infty,r^*)\times \R_+$ respectively and fulfil there
\begin{align*}
&\mu F_x(r,x)+ a(\tilde b-r)F_r(r,x)+\frac{\tilde \sigma^2}2F_{rr}(r,x)-rF(r,x)=-rx\;,
\\&\mu G_x(r,x)+ a(\tilde b-r)G_r(r,x)+\frac{\tilde \sigma^2}2G_{rr}(r,x)-rG(r,x)=0\;.
\end{align*}
In particular, $F$ solves the HJB equation on $[r^*,\infty)\times \R_+$ if $r^*\ge 0$. The function $G$ solves the HJB equation on $(-\infty,r^*]$ if $\psi_1(r)<1$ for $r<r^*$ and $\psi_1(r^*)=1$.  
\end{Bem}
\subsection{The function $G$ \label{G}}
Due to the properties of $\{r_t\}$, the hitting times $\tau$ and $\varrho$ are finite a.s. Note that $U^r_{t}=\frac {r-r_t}a+\tilde bt+\frac{\tilde \sigma} aW_{t}$. Thus, using the change of measure techniques, compare for instance \cite[p. 216]{HS}, with $\frac{\md \mP}{\md Q }=\exp\big(\frac{\tilde \sigma} a W_{\tau}+ \frac{\tilde \sigma^2}{2a^2}\tau\big)$ one obtains
\begin{align*}
\mE[e^{-U^r_{\tau}}]&=e^{-\frac {r-r^*}a}\mE[e^{-\tilde b\tau-\frac{\tilde \sigma} aW_{\tau}}]
=e^{-\frac {r-r^*}a}\mE_Q[e^{-\tilde b\tau-\frac{\tilde \sigma^2}{2a^2}\tau}]= e^{-\frac {r-r^*}a}\mE_Q[e^{- b\tau}]\;.
\end{align*}
Under the measure $Q$, the process $\{r_t\}$ has the long term mean $b=\tilde b-\frac{\tilde\sigma^2}{2a^2}$. In order to calculate $\mE[e^{-U^r_{\tau}}]$, we have to consider the Laplace transform of $\tau$. A parabolic cylinder function is defined as
\[
D_{-v}(y)= e^{-y^2/4} 2^{-v/2}\sqrt{\pi}\bigg\{\frac {1+\s_{k=1}^\infty \prod\limits_{j=0}^{k-1}\frac{(v+2j)y^{2k}}{(2k)!}}{\Gamma\big(\frac{v+1}2\big)}
-\frac{y\sqrt{2}\Big(1+\s_{k=1}^\infty \prod\limits_{j=0}^{k-1}\frac{(v+2j+1)y^{2k}}{(2k+1)!}\Big)}{\Gamma(v/2)}\bigg\},
\]
confer for example Borodin and Salminen \cite[p. 639]{bs}. By $\tilde D_v(y)$ we denote in the following the parabolic cylinder function $D_{-v}(y)$ multiplied by $e^{y^2/4}$.
In \cite[pp. 542]{bs}, one also finds the following formula
\begin{align*}
\mE_Q[e^{-b\tau}]=\frac{e^{\frac{(r-b)^2}{4\sigma^2}}D_{-b/a}\big(-\frac{r-b}{\sigma}\big)}{e^{\frac{(r^*- b)^2}{4\sigma^2}} D_{-b/a}\big(-\frac{r^*-b}{ \sigma}\big)}=\frac{\tilde D_{b/a}\big(-\frac{r-b}{\sigma}\big)}{\tilde D_{b/a}\big(-\frac{r^*-b}{ \sigma}\big)}\;,
\end{align*}
so that we can calculate $\mE[e^{-U^r_{\tau}}]$ explicitly. Note further that $e^{-b\tau}\tau$, $r<r^*$, is finite, so that one has 
\begin{align*}
\psi_2(r)=\mE[e^{-U^r_{\tau}}\tau]&=e^{-\frac {r-r^*}a}\mE_Q[e^{-b\tau}\tau]=e^{-\frac {r-r^*}a}\mE_Q\Big[\frac 1b \s_{n=1}^\infty\frac 1n\s_{k=0}^n\binom{n}{k}(-1)^k e^{-b\tau(k+1)}\Big]
\\&=\frac {e^{-\frac {r-r^*}a}}b \s_{n=1}^\infty\frac 1n\s_{k=0}^n\binom{n}{k}(-1)^k \frac{\tilde D_{b(k+1)/a}\big(-\frac{r-b}{\sigma}\big)}{\tilde D_{b(k+1)/a}\big(\frac{b-r^*}{\sigma}\big)}\;.
\end{align*}
Thus, for the function $G$ we find
\begin{align*}
G(r,x)
&=\big(x+G(r^*,0)\big)e^{-\frac {r-r^*}a}\frac{\tilde D_{b/a}\big(-\frac{r-b}{\sigma}\big)}{\tilde D_{b/a}\big(\frac{b-r^*}{\sigma}\big)}
\\&\quad{}+\frac {\mu e^{-\frac {r-r^*}a}} b \s_{n=1}^\infty\frac 1n\s_{k=0}^n\binom{n}{k}(-1)^k \frac{\tilde D_{b(k+1)/a}\big(-\frac{r-b}{\sigma}\big)}{\tilde D_{b(k+1)/a}\big(\frac{b-r^*}{\sigma}\big)}\;.
\end{align*}
In order to find an explicit expression for the function $F$, we have to find $\phi_1$ and $\phi_2$. For $\phi_1$ it holds due to \cite[pp. 542]{bs}
\[
\phi_1(r)=\mE[e^{-U^r_{\varrho}}]= e^{-\frac {r-r^*}a}\mE_Q[e^{-b\varrho}]=e^{-\frac {r-r^*}a}\frac{\tilde D_{b/a}\big(\frac{r-b}{\sigma}\big)}{\tilde D_{b/a}\big(\frac{r^*-b}{ \sigma}\big)}\;.
\]
To find $\phi_2$, we have to solve differential equation \eqref{eqnr2} and determine the coefficients with the help of the corresponding boundary conditions $\lim\limits_{r\to\infty}\phi_2(r)=0=\phi_2(r^*)$. 
\section{The Optimal Strategy}
In order to obtain a continuously differentiable solution, we have to guarantee that the first derivatives of $F$ and $G$ coincide on $\{(r^*,x),\;x\in\R_+\}$. 
Obviously, it holds $G(r^*,x)=F(r^*,x)$, $G_x(r^*,x)=F_x(r^*,x)$. 
The derivative of $F$ with respect to $r$ does not depend on $x$. In order to have a continuously differentiable solution with respect to $r$, the derivative $G_r(r^*,x)$ should not depend on $x$. Thus, we have three conditions, yielding a continuously differentiable with respect to $x$ and to $r$ function, solving the HJB equation on $\R\backslash \{r^*\}\times \R_+$:
\begin{itemize}
\item $G_{rx}(r^*,x)=\psi_1'(r^*)=0$,
\item $\mE\Big[e^{-U_{\tau}^r}\Big]>1$ for all $r<r^*$,
\item $r^*\ge 0$.
\end{itemize} 
It holds
\begin{align*}
G_{rx}(r,x)=\psi_1'(r)&=\frac{\md }{\md r} e^{-\frac {r-r^*}a}\mE_Q\big[e^{-b\tau}\big]=\frac{\md }{\md r} e^{-\frac {r-r^*}a}\frac{\tilde D_{b/a}\big(\frac{ b-r}{\sigma}\big)}{\tilde D_{b/a}\big(\frac{b-r^*}{\sigma}\big)}
\\&=- \frac1a e^{-\frac {r-r^*}a}\frac{\tilde D_{b/a}\big(\frac{ b-r}{\sigma}\big)}{\tilde D_{b/a}\big(\frac{ b-r^*}{\sigma}\big)} +\frac b{a\sigma} e^{-\frac {r-r^*}a}\frac{\tilde D_{b/a+1}\big(\frac{ b-r}{\sigma}\big)}{\tilde D_{b/a}\big(\frac{ b-r^*}{\sigma}\big)}
\\&=- \frac1a e^{-\frac {r-r^*}a}\mE_Q\big[e^{-b\tau}\big]+\frac b{a\sigma}e^{-\frac {r-r^*}a}\frac{\tilde D_{b/a+1}\big(\frac{ b-r^*}{\sigma}\big)}{\tilde D_{b/a}\big(\frac{ b-r^*}{\sigma}\big)}\mE_Q\big[e^{-(b+a)\tau}\big]
\\&=e^{-\frac {r-r^*}a}\mE\Big[e^{-b\tau}\Big(-\frac 1a+\frac b{a\sigma}\frac{\tilde D_{b/a+1}\big(\frac{ b-r^*}{\sigma}\big)}{\tilde D_{b/a}\big(\frac{ b-r^*}{\sigma}\big)}e^{-a\tau}\Big)\Big]\;.
\end{align*}
Thus, if $\frac{\tilde D_{b/a+1}\big(\frac{ b-r^*}{\sigma}\big)}{\tilde D_{b/a}\big(\frac{b-r^*}{\sigma}\big)}=\frac {\sigma}b$ then $\psi_1'(r^*)= 0$ and $\psi_1(r)$ is strictly decreasing in $r$, which implies $\psi_1(r)>1$ for all $r<r^*$.  
\\In order to show the existence and uniqueness of $r^*$ with the properties described above, we have to consider the function
\[
H(y):=\frac{\tilde D_{b/a+1}\big(\frac{ b-y}{\sigma}\big)}{\tilde D_{b/a}\big(\frac{ b-y}{\sigma}\big)}\;.
\]
Since it is impossible, to determine the properties of the functions $\tilde D$ directly, we will derive the properties of $H$ from the differential equation corresponding to $\tilde D$.
\begin{Lem}
The function $H:\R\to\R_+$ is strictly increasing, convex, surjective and $H(0)<\frac{\sigma}b$.
\end{Lem}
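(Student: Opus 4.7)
My plan is to reduce every property of $H$ to a single Riccati-type ODE and read off positivity, monotonicity, surjectivity and the bound $H(0)<\sigma/b$ from it; convexity is expected to be the technical crux.

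I first derive the governing ODE. Starting from the parabolic cylinder equation satisfied by $D_{-v}$ (with $v:=b/a$) and the substitution $\tilde D_v(z)=e^{z^2/4}D_{-v}(z)$, a short computation yields $\tilde D_v''-z\,\tilde D_v'-v\,\tilde D_v=0$. Together with the recurrence $\tilde D_v'=-v\,\tilde D_{v+1}$ (obtained by differentiating the integral representation $\tilde D_w(z)=\Gamma(w)^{-1}\int_0^\infty t^{w-1}e^{-zt-t^2/2}\md t$, valid for $w>0$), this gives the three-term identity $(v+1)\tilde D_{v+2}=\tilde D_v-z\,\tilde D_{v+1}$, and a direct differentiation of $h(z):=\tilde D_{v+1}(z)/\tilde D_v(z)$ then produces the Riccati equation
\[
h'(z)=v\,h(z)^2+z\,h(z)-1,
\]
or equivalently $\sigma H'(y)=1-\tfrac{b-y}{\sigma}H(y)-\tfrac{b}{a}H(y)^2$ since $H(y)=h((b-y)/\sigma)$.

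Positivity $H>0$ is immediate from the integral representation. For monotonicity I introduce the roots $h_\pm(z):=(-z\pm\sqrt{z^2+4v})/(2v)$ of $vh^2+zh-1=0$, note $h_-<0<h_+$, and observe that $0<h<h_+$ forces $h'<0$. Watson's lemma gives $h(z)=1/z+O(z^{-3})$ at $+\infty$, while a Taylor expansion yields $h_+(z)=1/z-v/z^3+O(z^{-5})$, so $h<h_+$ near $+\infty$. A non-crossing argument extends this globally: at a hypothetical contact point $z_0$ one would have $h'(z_0)=0$, but a direct calculation shows $h_+'<0$ everywhere (since $z/\sqrt{z^2+4v}<1$), so tangency is impossible. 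Hence $h<h_+$ throughout and $H'>0$. Surjectivity follows from the same asymptotic at $+\infty$ combined with Laplace's method at the saddle $t=|z|$ for $z\to-\infty$, which gives $\tilde D_w(z)\sim\sqrt{2\pi}\,\Gamma(w)^{-1}|z|^{w-1}e^{z^2/2}$ and hence $h(z)\sim|z|/v\to+\infty$; continuity and strict monotonicity then make $H$ a bijection $\R\to(0,\infty)=\R_+$. Finally, at $z=b/\sigma$ the strict bound $h<h_+$ gives $H(0)<h_+(b/\sigma)$, and squaring and simplifying reduces $h_+(b/\sigma)<\sigma/b$ to the trivially true $\sigma^2>0$.

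Convexity is the step I expect to be the main obstacle. Differentiating the Riccati once more yields $h''(z)=h(z)+(2v\,h(z)+z)\,h'(z)$, but the sign of $2vh+z$ is not a priori controlled, so $h''>0$ cannot be read off by inspection. My plan is to mimic the monotonicity argument: show $h''>0$ at $+\infty$ using the leading asymptotic $h''(z)\sim 2/z^3$, then exclude a sign change globally via a second non-crossing argument exploiting the Riccati together with the bound $h<h_+$ already obtained. As a complementary approach I would use the probabilistic interpretation $h(z)=v^{-1}\mE_z[T]$, where $T$ has density proportional to $t^{v-1}e^{-zt-t^2/2}$ on $(0,\infty)$; under this view $h''(z)$ equals $v^{-1}$ times the third cumulant of $T$ under the tilted measure, so convexity of $H$ becomes equivalent to positive skewness of $T$ for every $z\in\R$ — a Turán-type inequality for parabolic cylinder functions whose verification I anticipate is where the real analytic work lies.
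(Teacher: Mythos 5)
Your treatment of strict monotonicity, surjectivity and the bound $H(0)<\frac{\sigma}{b}$ is correct, and it takes a genuinely different route from the paper. The paper argues probabilistically: it shows that $h(r)=\mE_Q[e^{-b\tau}]$ solves $\frac{\tilde\sigma^2}{2}h''+a(b-r)h'=bh$ with $h(r^*)=1$ and $\lim_{r\to-\infty}h(r)=0$, notes $H=\frac{a\sigma}{b}\,h'/h$, passes to a Riccati equation for $H$, and propagates $H'>0$ from the boundary behaviour at $-\infty$ via the observation that $H''>0$ wherever $H'=0$; surjectivity then comes from the limits $H(-\infty)=0$, $H(+\infty)=\infty$ (obtained by contradiction from the same equation), and $H(0)<\frac{\sigma}{b}$ by evaluating the equation at $r=0$. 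You instead work directly on the special functions: integral representation, the recurrence $\tilde D_v'=-v\tilde D_{v+1}$, the Riccati $h'=vh^2+zh-1$ for the ratio $h=\tilde D_{v+1}/\tilde D_v$, and a comparison with the positive root $h_+$ of the quadratic. This buys explicitness: incidentally, your Riccati is the correct one, whereas the paper's display $\frac{\tilde\sigma^2}{2}H'=b-a(b-r)H-\frac{\tilde\sigma^2}{2}H^2$ has inconsistent constants (the correct form is $\sigma H'=1-\frac{b-r}{\sigma}H-\frac{b}{a}H^2$), though only its sign structure is used there. One repair is needed in your comparison step: from $h(z)=1/z+O(z^{-3})$ alone you cannot conclude $h<h_+$ near $+\infty$, since $h_+(z)=1/z-v/z^3+O(z^{-5})$ differs from $1/z$ precisely at order $z^{-3}$; you must push Watson's lemma one order further, $h(z)=1/z-(v+1)/z^3+O(z^{-5})$, and use $v+1>v$. (Alternatively you can avoid refined asymptotics altogether: if $h\ge h_+$ at some $z_0$, then $h'\ge 0$ and $h_+'<0$ force $h\ge h_+(z_0)>0$ for all $z\ge z_0$, contradicting $h(z)\to 0$.)

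The genuine gap is convexity. You derive $h''=h+(2vh+z)h'$, correctly observe that the sign of $2vh+z$ is not controlled, and then only outline two possible attacks (a second non-crossing argument; positivity of the third cumulant of the tilted variable $T$) without carrying either out; so, measured against the statement as written, your proof is incomplete, and the difficulty you anticipate is real. You should know, however, that the paper's own proof has the same omission: it establishes convexity only of the auxiliary function $h(r)=\mE_Q[e^{-b\tau}]$ (via $h'(r)=h'(r^*)\mE[e^{-(b+a)\tau}]$) together with the implication that $H''>0$ at critical points of $H$, neither of which gives convexity of $H$ itself. Moreover, convexity is never used afterwards: the only consequence the paper draws from this lemma is the existence of a unique $r^*>0$ with $H(r^*)=\frac{\sigma}{b}$, which follows from strict monotonicity, surjectivity, continuity and $H(0)<\frac{\sigma}{b}$ --- all of which you do prove. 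So your proposal, like the paper's proof, establishes exactly the part of the lemma that is actually needed, but not the lemma as literally stated.
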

\begin{proof}
Similar to Shreve et al. \cite{shreve} and Lemma \ref{lem:diff}, one can show that the function $h(r):=\mE[e^{-b\tau}]$ solves the following equation 
\[
\frac{\tilde\sigma^2}2 h''(r)+a(b-r)h'(r)=b h(r)\label{eqnr3} 
\]
with boundary conditions $h(r^*)=1$ and $\lim\limits_{r\to-\infty} h(r)=0$. Due to the properties of $\tau$, the function $h$ is strictly increasing. For the same reason, $h'(r)$ is strictly increasing: 
\[
h'(r)=\frac b{a\sigma}\frac{\tilde D_{b/a+1}\big(\frac{ b-r}{\sigma}\big)}{\tilde D_{b/a}\big(\frac{ b-r^*}{\sigma}\big)}=h'(r^*)\mE[e^{-(b+a)\tau}]\;.
\]
Since $b>0$, the function $h(r)$ does not have real zeros. Dividing \eqref{eqnr3} by $h(r)$ yields
\begin{equation}
\frac{\tilde\sigma^2}2 \frac{h''(r)}{h(r)}+a(b-r)\frac{h'(r)}{h(r)}=b\;.\label{eqnh}
\end{equation}
Note that $H(r)=\frac{\sigma a}b \frac{h'(r)}{h(r)}$.
\\Letting $r\to-\infty$ on the left side of the above equation yields $\lim\limits_{r\to-\infty} \frac{h'(r)}{h(r)}=0$, because otherwise the left hand side would become infinite. Thus, we can conclude
\[
\lim\limits_{r\to-\infty} H'(r)=\frac{\sigma a}b\lim\limits_{r\to-\infty} \Big\{\frac{h''(r)}{h(r)}-\frac{h'(r)^2}{h(r)^2}\Big\}\ge 0\;.
\]
On the other hand, we can rewrite the above equation in terms of $H$ and its derivatives
\[
\frac{\tilde \sigma ^2}2 H'(r)=b-a(b-r)H(r)-\frac{\tilde \sigma ^2}2 H(r)^2\;,
\]
which means $\frac{\tilde \sigma ^2}2 H''(r)=-a(b-r)H'(r)-\tilde \sigma ^2 H(r)H'(r)+aH(r)$. According to this one has $H''(r)>0$ if $H'(r)=0$, which implies  $H'(r)>0$ for $r\in\R$ due to $\lim\limits_{r\to-\infty}H'(r)\ge 0$.
\\$\bullet$ It holds $\lim\limits_{r\to\infty}\frac{h'(r)}{h(r)}=\infty$. Assume first $\lim\limits_{r\to\infty}\frac{h'(r)}{h(r)}=-A>-\infty$ for some $A\in\R_+$, i.e. $\lim\limits_{r\to\infty}\frac{h'(r)}{h(r)}=0$, which contradicts $H'(r)>0$. Assume now $\lim\limits_{r\to\infty}\frac{h'(r)}{h(r)}=B<\infty$, which gives $\lim\limits_{r\to\infty}H'(r)=0$. But Equation \eqref{eqnh} yields $\lim\limits_{r\to\infty}\frac{h''(r)}{h(r)}=\infty$, giving $\lim\limits_{r\to\infty}H'(r)=\lim\limits_{r\to\infty} \Big\{\frac{h''(r)}{h(r)}-\frac{h'(r)^2}{h(r)^2}\Big\}=\infty$.
\medskip
\\Thus, $\lim\limits_{r\to-\infty} H(r)=0$, $\lim\limits_{r\to-\infty} H(r)=\infty$. By the intermediate value theorem, we can conclude that $H(r)$ attains every value in $\R_+$.
\medskip
\\Inserting $r=0$ into Equation \eqref{eqnh} and multiplying \eqref{eqnh} by $\frac{\sigma}{b^2}$, yields
\begin{align*}
\frac\sigma b&=\frac{\tilde\sigma^2\sigma}{2b^2} \frac{h''(0)}{h(0)}+\frac{\sigma a}b\frac{h'(0)}{h(0)}
=\frac{\tilde\sigma^2\sigma}{2b^2} \frac{h''(0)}{h(0)}+H(0)\;.
\end{align*}
Since $\frac{\tilde\sigma^2\sigma}{2b^2} \frac{h''(0)}{h(0)}>0$, it holds $H(0)<\frac\sigma b$. 
\end{proof}
Due to the above lemma, there is a unique $r^*>0$ such that $H(r^*)=\frac{\sigma}b$, meaning that $G$ solves the HJB equation on $(-\infty,r^*]\times \R_+$ and $F$ solves the HJB equation on $[r^*,\infty)\times \R_+$. \medskip
\\\textbf{Assumption:} From now on, we assume that the functions $G$ and $F$ are defined for this special $r^*$.
\medskip
\\Letting $G(r^*,0)=\mu\frac{\psi_2'(r^*)- \phi_2'(r^*)}{\phi_1'(r^*)}$ guarantees $G_r(r^*,x)=F_r(r^*,x)$ for all $x\in\R_+$. 
It remains to show that
\begin{Lem}
The constant 
\[
\Delta:=\mu\frac{\psi_2'(r^*)- \phi_2'(r^*)}{\phi_1'(r^*)}
\]
is positive and finite.
\end{Lem}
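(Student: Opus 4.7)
My plan is to establish three sign facts at the fixed point $r^*$: $\phi_1'(r^*)<0$, $\psi_2'(r^*)<0$, and $\phi_2'(r^*)>0$. Granted these, the numerator $\psi_2'(r^*)-\phi_2'(r^*)$ is strictly negative, the denominator $\phi_1'(r^*)$ is strictly negative, so $\Delta=\mu(\psi_2'(r^*)-\phi_2'(r^*))/\phi_1'(r^*)$ is a ratio of two finite nonzero numbers of the same sign, hence strictly positive and finite.

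For $\psi_2'(r^*)$ and $\phi_2'(r^*)$ I would argue from the ODEs these functions satisfy together with a second-order Taylor expansion at $r^*$. Since $\psi_2$ and $\phi_2$ are expectations of nonnegative integrands and vanish at $r^*$, one immediately gets $\psi_2'(r^*)\le 0$ and $\phi_2'(r^*)\ge 0$. To exclude equality, assume $\phi_2'(r^*)=0$; plugging $r=r^*$ into \eqref{eqnr2} and using $\phi_2(r^*)=0$ yields $\frac{\tilde\sigma^2}{2}\phi_2''(r^*)=-1$, so $\phi_2''(r^*)<0$. A Taylor expansion then forces $\phi_2(r)<0$ for $r$ slightly above $r^*$, contradicting $\phi_2\ge 0$. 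The same argument with \eqref{eqnr5}, using $\psi_1(r^*)=1$ and $\psi_2(r^*)=0$, gives $\psi_2''(r^*)=-2/\tilde\sigma^2<0$ under the assumption $\psi_2'(r^*)=0$, and produces negative values for $\psi_2$ just to the left of $r^*$.

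For $\phi_1'(r^*)<0$ I would first establish $\phi_1(r)\le 1=\phi_1(r^*)$ on $[r^*,\infty)$. Since the preceding lemma forces $r^*>0$ and $\{r_s\}$ has continuous sample paths, every trajectory starting from $r\ge r^*$ stays in $[r^*,\infty)$ up to time $\varrho$, so $U_\varrho^r=\int_0^\varrho r_s\,\md s\ge r^*\varrho\ge 0$ and therefore $\phi_1(r)=\mE[e^{-U_\varrho^r}]\le 1$. Thus $r^*$ is a boundary maximum on $[r^*,\infty)$, which gives $\phi_1'(r^*)\le 0$. To rule out equality, I would substitute $\phi_1'(r^*)=0$ into \eqref{eqnr1} at $r^*$ to get $\phi_1''(r^*)=2r^*/\tilde\sigma^2>0$, which would make $\phi_1$ strictly exceed $1$ just to the right of $r^*$, contradicting the bound just proved.

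The only genuinely nontrivial step is the geometric observation that $r_s\ge r^*>0$ on $[0,\varrho]$ for paths starting at $r\ge r^*$; every remaining step is a mechanical substitution of the zeroth- and first-order data into the relevant ODE at $r^*$ followed by a Taylor contradiction. Finiteness of $\Delta$ then comes for free from the smoothness of $\psi_2,\phi_2,\phi_1$ at $r^*$ and the strict sign of the denominator $\phi_1'(r^*)$.
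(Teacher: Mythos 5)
Your proof is correct, but it takes a genuinely different route from the paper's. The paper argues probabilistically and globally: after a change of measure it observes that the hitting times $\varrho$ and $\tau$ are pathwise monotone in the starting point $r$, so that $\phi_1$ is strictly decreasing, $e^{r/a}\phi_2(r)$ is strictly increasing, and (using that $\psi_1$ is decreasing by the choice of $r^*$) $\psi_2$ is strictly decreasing; the three sign facts at $r^*$ then follow from this global monotonicity. You instead argue analytically and locally: nonnegativity of $\psi_2,\phi_2$ together with their vanishing at $r^*$, and the bound $\phi_1\le 1=\phi_1(r^*)$ (your observation that $r_s\ge r^*>0$ up to $\varrho$, which correctly invokes $r^*>0$ from the preceding lemma), give the weak one-sided sign of each derivative; then plugging the boundary data into the ODEs \eqref{eqnr1}, \eqref{eqnr2}, \eqref{eqnr5} of Lemma \ref{lem:diff} pins down the second derivative whenever the first vanishes, and a second-order Taylor expansion yields a contradiction, upgrading each sign to strict. (Note you read Lemma \ref{lem:diff} as intended: its statement says ``$\phi_1$ solves \eqref{eqnr2}'' but, as its proof shows, the solution of \eqref{eqnr2} with those boundary conditions is $\phi_2$.) The trade-off: the paper's coupling claims (``$\varrho$ is increasing in $r$'', ``$\tau$ and $-U_s^r$ are strictly decreasing in $r$'') are stated without proof and carry the real content there, whereas your argument replaces them by mechanical ODE evaluations at $r^*$, making it arguably more self-contained; the price is that your treatment of $\phi_1$ genuinely needs $r^*>0$ (to get $\phi_1''(r^*)=2r^*/\tilde\sigma^2>0$ and the bound $U^r_\varrho\ge0$), while the paper's monotonicity argument for $\phi_1$ works for any barrier. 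Both proofs use only $\psi_2'(r^*)\le 0$ and the strict signs of $\phi_2'(r^*)$ and $\phi_1'(r^*)$ to conclude, and finiteness is in both cases immediate from the smoothness guaranteed by Lemma \ref{lem:diff}.
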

\begin{proof}
Using the same change of measure technique like in Subsection \ref{G}, we obtain
\begin{align*}
\phi_1(r)=\mE\big[e^{-U_\varrho^r}\big]=e^{-\frac{r-r^*}a}\mE_Q\big[e^{-b\varrho}\big]\;.
\end{align*}
The stopping time $\varrho$ is increasing in $r$, implying that $\phi_1(r)$ is strictly decreasing in $r$, i.e. $\phi_1'(r^*)<0$.
\\As for the function $\phi_2$, it holds
\[
\phi_2(r)=\mE\Big[\int_0^\varrho e^{-U_s^r}\md s\Big]=e^{-\frac{r}a}\mE\Big[\int_0^\varrho e^{\frac{r}ae^{-as}-U_s^0}\md s\Big]\;,
\]
meaning that $e^{\frac{r}a}\phi_2(r)$ is strictly increasing in $r$. Thus, using that $\phi_2(r^*)=0$:
\[
\phi_2'(r^*)=-\frac 1a \phi_2(r^*)+e^{-\frac ra}\frac{\md}{\md r}\Big(e^{\frac{r}a}\phi_2(r)\Big)\Big|_{r=r^*}=e^{-\frac ra}\frac{\md}{\md r}\Big(e^{\frac{r}a}\phi_2(r)\Big)\Big|_{r=r^*}>0\;.
\]
We can rewrite the function $\psi_2$ like in the proof of Lemma \ref{lem:diff}
\begin{align*}
\psi_2(r)=\mE\Big[\int_0^\tau e^{-U^r_s}\psi_1(r_s)\md s\Big]\;.
\end{align*}
By the definition of $r^*$, the function $\psi_1(r)$ is strictly decreasing in $r$, which means that $\psi_1(r_s)$ is strictly decreasing in $r$. Since, $\tau$ and $-U_s^r$ are strictly decreasing in $r$, we can conclude that $\psi_2(r)$ is strictly decreasing, which proves the claim.
\end{proof}
\begin{Satz}
The optimal strategy $C^*$ is to immediately consume any capital bigger than zero if $r\ge r^*$, i.e. $C^*_t=\one_{[r\ge r^*]}X^*_{t-}$, where $\{X^*_t\}$ is the surplus process under the strategy $C^*$. The value function $V(r,x)$ is continuously differentiable with respect to $r$ and to $x$, twice continuously differentiable with respect to $r$ on $\R\backslash\{r^*\}\times \R_+$ and fulfils $V(r,x)=v(r,x)$ with
\[
v(r,x)=\begin{cases}
G(r,x) &: \mbox{$(r,x)\in (-\infty,r^*]\times \R_+$}\\
F(r,x) &: \mbox{$(r,x)\in (r^*,\infty)\times \R_+$}
\end{cases}.
\]
\end{Satz}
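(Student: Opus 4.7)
The plan is to run the standard verification scheme. First I would check that the candidate $v$ has the regularity claimed in the statement and then verify the HJB equation pointwise. By construction $G(r^*,x)=F(r^*,x)$ and $G_x(r^*,x)=F_x(r^*,x)=1$; the choice $G(r^*,0)=\Delta$ of Lemma~3.2 is engineered precisely so that $G_r(r^*,x)=F_r(r^*,x)$ as well, both being independent of $x$ ($G_r$ via $\psi_1'(r^*)=0$, and $F_r$ intrinsically). On $[r^*,\infty)\times\R_+$ the Remark after Lemma~\ref{lem:diff} yields $L(F)(r,x)=-rx\le 0$ since $r^*>0$, while $F_x\equiv 1$. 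On $(-\infty,r^*]\times\R_+$, $L(G)=0$ and $G_x(r,x)=\psi_1(r)\ge 1$ since $\psi_1(r^*)=1$, $\psi_1'(r^*)=0$, and $\psi_1$ is strictly decreasing by the defining property of $r^*$ via $H(r^*)=\sigma/b$. Hence $v$ solves \eqref{unrestricted} pointwise.

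For the verification step proper, fix an admissible strategy $C$ and decompose $\md C_t=\md C_t^c+\Delta C_t$. Because $v$ is globally $C^{1,1}$ and $C^{2,1}$ in $r$ off the single level $\{r=r^*\}$, across which $v_r$ is continuous and which the OU diffusion visits on a Lebesgue-null time set, Itô's formula combined with integration by parts against the finite-variation factor $e^{-U_t^r}$ gives
\begin{align*}
e^{-U_t^r}v(r_t,X_t^C)&=v(r,x)+\int_0^t e^{-U_s^r}L(v)(r_s,X_s^C)\md s-\int_0^t e^{-U_s^r}v_x(r_s,X_s^C)\md C_s^c\\
&\quad{}+\s_{0<s\le t}e^{-U_s^r}\bigl(v(r_s,X_s^C)-v(r_s,X_{s-}^C)\bigr)+M_t,
\end{align*}
where $M_t$ is a local martingale. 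The inequalities $L(v)\le 0$ and $v_x\ge 1$ (the latter yielding $v(r_s,X_s^C)-v(r_s,X_{s-}^C)\le -\Delta C_s$ by the mean value theorem), together with a standard localization and Fatou's lemma, produce
\[
v(r,x)\ge\mE\Bigl[\int_0^t e^{-U_s^r}\md C_s\Bigr]+\mE\bigl[e^{-U_t^r}v(r_t,X_t^C)\bigr],
\]
and letting $t\to\infty$ gives $v(r,x)\ge V^C(r,x)$.

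To close the loop I would rerun the same computation along $C^*$: on $\{r<r^*\}$ no consumption is made and $L(G)=0$, while on $\{r\ge r^*\}$ the surplus is instantly emptied so $X^*\equiv 0$, $L(F)(r,0)=0$, $F_x\equiv 1$, and $\md C^*=\mu\,\md t$ after the (possible) initial lump sum. Every inequality becomes an equality, giving $V^{C^*}(r,x)=v(r,x)$ and optimality of $C^*$. The main delicate point is the transversality condition $\mE[e^{-U_t^r}v(r_t,X_t^C)]\to 0$ as $t\to\infty$, together with genuine (not just local) martingality of $M_t$ in the limit. Since $v$ grows at most linearly in $x$ and $X_t^C\le x+\mu t$, while the assumption $\tilde b>0$ makes $r_t$ positively mean-reverting so that $\mE[e^{-U_t^r}]$ decays exponentially in $t$ via standard Laplace-transform estimates for Gaussian functionals, the exponential decay dominates the linear-in-$t$ growth and the step goes through. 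The computations are routine but fiddly, and that is the only real obstacle.
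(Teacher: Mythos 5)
Your overall scheme coincides with the paper's: verify that $v$ solves the HJB equation, apply a generalized It\^o formula that tolerates the missing second $r$-derivative along $\{r=r^*\}$, use $L(v)\le 0$ and $v_x\ge 1$ to get the supersolution inequality for an arbitrary admissible $C$, and check equality along $C^*$. The packaging differs only mildly: the paper first writes $v(r_t,X_t^C)=v(r_t,x)+\int_0^t v_x(r_t,X_s^C)\md X_s^C$ (exact, with no jump correction, because $v$ is affine in $x$ on each region), then applies the Meyer--It\^o formula from Protter and Fubini's theorem, whereas you decompose $C$ into a continuous part plus jumps and control the jumps by the mean value theorem; both are legitimate. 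One caveat: ``the diffusion spends Lebesgue-null time at $r^*$'' is not by itself a licence to apply It\^o to a function that is not $C^2$ there; what makes it work is that $v_r$ is continuous across $r^*$ (so no local-time term appears) and $v_{rr}$ is locally integrable --- which is precisely the Meyer--It\^o hypothesis the paper invokes, and which you do mention in passing.

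The genuine gap is in the transversality step, which you rightly call the crux but then dismiss with an argument that does not work as stated. First, the decay of $\mE[e^{-U_t^r}]$ is not driven by $\tilde b>0$: writing $U_t^r=\frac{r-r_t}{a}+\tilde b t+\frac{\tilde\sigma}{a}W_t$, the Brownian term contributes a factor $e^{\tilde\sigma^2 t/(2a^2)}$ to the expectation, so the true decay rate is $e^{-bt}$ with $b=\tilde b-\frac{\tilde\sigma^2}{2a^2}$, and the standing assumption $b>0$ is what is needed; with merely $\tilde b>0$ the expectation can explode. Second, and more importantly, $v$ does \emph{not} grow linearly in $x$ uniformly in $r$: for $r<r^*$ one has $G(r,x)=(x+\Delta)\psi_1(r)+\mu\psi_2(r)$ with $\psi_1(r)\le e^{-(r-r^*)/a}$ and $\psi_2(r)\le\frac1b e^{-(r-r^*)/a}$, both unbounded as $r\to-\infty$. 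Hence the quantity to control is the joint moment $\mE\big[e^{-U_t^r}\big(1+e^{-(r_t-r^*)/a}\big)\big]\cdot\big(x+\mu t+\frac\mu b+\Delta\big)$, not $\mE[e^{-U_t^r}]$ times a constant multiple of $x+\mu t$. The paper does exactly this: it first establishes the pointwise bound $v(r_t,x+\mu t)\le\big(1+e^{-(r_t-r^*)/a}\big)\big\{x+\mu t+\frac\mu b+\Delta\big\}$ and then evaluates the joint moment via the change of measure to $Q$, obtaining $e^{-bt}e^{-\frac{r-b}{a}(1-e^{-at})+\frac{\tilde\sigma^2}{4a^3}(1-e^{-2at})}\big\{x+\mu t+\frac\mu b+\Delta\big\}\to 0$. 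Finally, note that localization plus Fatou only yields $v\ge V^C$; to get the \emph{equality} $v=V^{C^*}$ you must pass to the limit in a genuine identity, which requires exactly this explicit domination (together with monotone convergence for the consumption integral). So the estimate you deferred as ``routine but fiddly'' is not a final check --- it carries the proof of optimality, and your sketch of it rests on the wrong hypothesis and on a growth bound that is false uniformly in $r$.
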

\begin{proof} 
Let $C$ be an arbitrary admissible strategy. Applying the fundamental theorem of calculus yields
\begin{align}
v(r_t,X_t^C)=v(r_t,x)+\int_0^t v_x(r_t,X^C_s)\md X^C_s\;.\label{b1}
\end{align}
In the following, we examine the two terms on the right side of the above equation.
Ito's formula requires $v$ to be twice continuously differentiable with respect to $r$, which is not fulfilled for $r=r^*$ and $x>0$.
Therefore, we use the extant second derivative Meyer-Ito formula \cite[p. 221]{protter}
where we just need $v$ to have an absolutely continuous derivative with respect to $r$ and $v_{rr}$ to be locally $L^1$. Since $F_r(r^*,x)=G_r(r^*,x)=1$ for all $x\in\R_+$, it is an easy exercise to verify that $v$ satisfies all above requirements. 
Then,
\begin{align}
v(r_t,x)=v(r,x)+\int_0^t v_r(r_s,x)\md r_s+\frac{\tilde\sigma^2}2\int_0^t v_{rr}(r_s,x)\md s\;. \label{b2}
\end{align}
Before we consider $v_x(r_t,X^C_s)$, note that $v_x$ does not depend on $x$ and it holds either $v_x=1$ or $v_x=\psi_1$. In particular, one can interchange the derivation order, i.e. $v_{rx}=v_{xr}$ and $v_{rrx}=v_{xrr}$. Like $v$, the function $v_x$ fulfils the conditions of the extant second derivative Meyer-Ito formula, \cite[p. 221]{protter}:
\begin{align*}
v_x(r_t,X_s^C)&=v_x(r_s,X_s^C)+\int_s^t v_{xr}(r_y,X_s^C)\md r_y+\frac{\tilde\sigma^2}2\int_s^t v_{xrr}(r_y,X_s^C)\md y
\\&=v_x(r_s,x)+\int_s^t v_{rx}(r_y,X_s^C)\md r_y+\frac{\tilde\sigma^2}2\int_s^t v_{rrx}(r_y,X_s^C)\md y\;. 
\end{align*}
Thus, integrating the above equality from $0$ to $t$ with respect to $\md X_s^C$ and applying Fubini's theorem yields
\begin{equation}
\label{b3}
\begin{split}
\int_0^t& v_x(r_t,X_s^C)\md X^C_s
\\&=\int_0^t \Big\{v_x(r_s,X_s^C)+\int_s^t v_{rx}(r_y,X_s^C)\md r_y+\frac{\tilde\sigma^2}2\int_s^t v_{rrx}(r_y,X_s^C)\md y \Big\}\md X^C_s
\\&=\int_0^t v_x(r_s,X_s^C)\md X^C_s+\int_0^t \big \{v_{r}(r_y,X^C_y)- v_{r}(r_y,x)\big\}\md r_y
\\&\quad {}+\frac{\tilde\sigma^2}2\int_0^t \big\{v_{rr}(r_y,X_y^C)- v_{rr}(r_y,x)\big\} \md y\;. 
\end{split}
\end{equation}
Thus, inserting \eqref{b2} and \eqref{b3} into \eqref{b1} yields
\begin{align*}
v(r_t,X_t^C)&= v(r,x)+\int_0^t v_r(r_s,X^C_s)\md r_s+\frac{\tilde\sigma^2}2\int_0^t v_{rr}(r_s,X_s^C)\md s+ \int_0^t v_x(r_s,X_s^C)\md X_s^C
\\&=v(r,x)+\int_0^t \mu v_x(r_s,X^C_s)+a(\tilde b-r_s)v_r(r_s,X^C_s)+\frac{\tilde\sigma^2}2 v_{rr}(r_s,X_s^C)\md s
\\&\quad {}+\tilde\sigma \int_0^t v_{r}(r_s,X_s^C)\md s - \int_0^t v_x(r_s,X_s^C)\md C_s\;.
\end{align*}
Via the product rule, using $v_x(r_s,X^*_s)\ge 1$ and $L(v)(r_s,X^C_s)\le 0$ we obtain
\begin{align*}
e^{-U_t^r}v(r_t,X_t^C)&=v(r,x)+ 
\int_0^t e^{-U_s^r}L(v)(r_s,X^C_s)\md s
+\tilde\sigma \int_0^t v_{r}(r_s,X_s^C)\md W_s
\\&\quad {}- \int_0^t e^{-U_s^r}v_x(r_s,X_s^C)\md C_s
\\&\le v(r,x)+\tilde\sigma \int_0^t e^{-U_s^r}v_{r}(r_s,X_s^C)\md W_s- \int_0^t e^{-U_s^r}\md C_s\;,
\end{align*}
with $L$ defined in \eqref{oper}.
Note that for the strategy $C^*$ equality holds.
Since the stochastic integral is a martingale, taking the expectations on the both sides of the above inequality yields
\begin{align}
\mE\Big[e^{-U_t^r}v(r_t,X_t^C)+\int_0^t e^{-U^r_s}\md C_s\Big]\le v(r,x)\;.\label{b5}
\end{align}
Consider now the first term in the expectation above. Since $v$ is increasing in $x$, one obtains
\begin{align*}
\mE\Big[e^{-U_t^r}v(r_t,X_t^C)\Big]\le \mE\Big[e^{-U_t^r}v(r_t,x+\mu t)\Big]\;.
\end{align*} 
From Subsection \ref{G}, we know that under the measure $Q$ it holds if $r<r^*$
\begin{align*}
&\psi_1(r)= e^{-\frac{r-r^*}a}\mE_Q[e^{-b\tau}]\le e^{-\frac{r-r^*}a},
\\&\psi_2(r)=\mE\Big[\int_0^\tau e^{-U_s^r}\psi_1(r_s)\md s\Big]\le \int_0^\infty e^{-bs} \mE_Q\big[e^{-\frac{r-r_s}a}e^{-\frac{r_s-r^*}a}\big]\md s \le\frac 1b e^{-\frac{r-r^*}a};
\end{align*}
analogously for $r\ge r^*$ one obtains $\phi_1(r)\le 1$ and $\phi_2(r)\le \frac 1b$.
Therefore, we can estimate
\begin{align*}
v(r_t,x+\mu t)&=\one_{[r_t\ge r^*]}F(r_t,x+\mu t)+\one_{[r_t< r^*]}G(r_t,x+\mu t)
\\&\le \big(1+e^{-\frac{r_t-r^*}a}\big)\big\{x+\mu t+\frac\mu b +\Delta\big\}\;.
\end{align*}
Using the measure $Q$ defined in Subsection \ref{G} and the fact that $r_t$ under $Q$ is normally distributed with mean $re^{-at}+b(1-e^{-at})$ and variance $\frac{\tilde\sigma^2}{2a}(1-e^{-2at})$, we obtain the following estimation
\begin{align*}
\mE\Big[e^{-U_t^r}v(r_t,X_t^C)\Big]&\le \mE\big[e^{-U_t^r}\big(1+e^{-\frac{r_t-r^*}a}\big)\big]\big\{x+\mu t+\frac\mu b +\Delta\big\}
\\&\le  e^{-\frac ra-bt}\mE_Q\big[\big(e^{\frac{r_t}a}+e^{\frac{r^*}a}\big)\big]\big\{x+\mu t+\frac\mu b +\Delta\big\}
\\&\le e^{-bt}e^{-\frac {r-b}a (1-e^{-at})+\frac {\tilde\sigma^2}{4a^3}(1-e^{-2at})}\big\{x+\mu t+\frac\mu b +\Delta\big\}\;.
\end{align*} 
Also, it holds 
\begin{align*}
\mE\Big[\int_0^t e^{-U^r_s}\md C_s\Big]\le \int_0^\infty \mE\big[e^{-U^r_s}\big]\md X_s<\infty\,,
\end{align*}
so that we can let $t\to\infty$ in \eqref{b5}, and obtain by Lebesgue's dominated convergence theorem
\[
v(r,x)\ge \mE\Big[\int_0^\infty e^{-U^r_s}\md C_s\Big]\;.
\]
\end{proof}
\begin{figure}[t]
\includegraphics[scale=0.35, bb = 0 300 100 400]{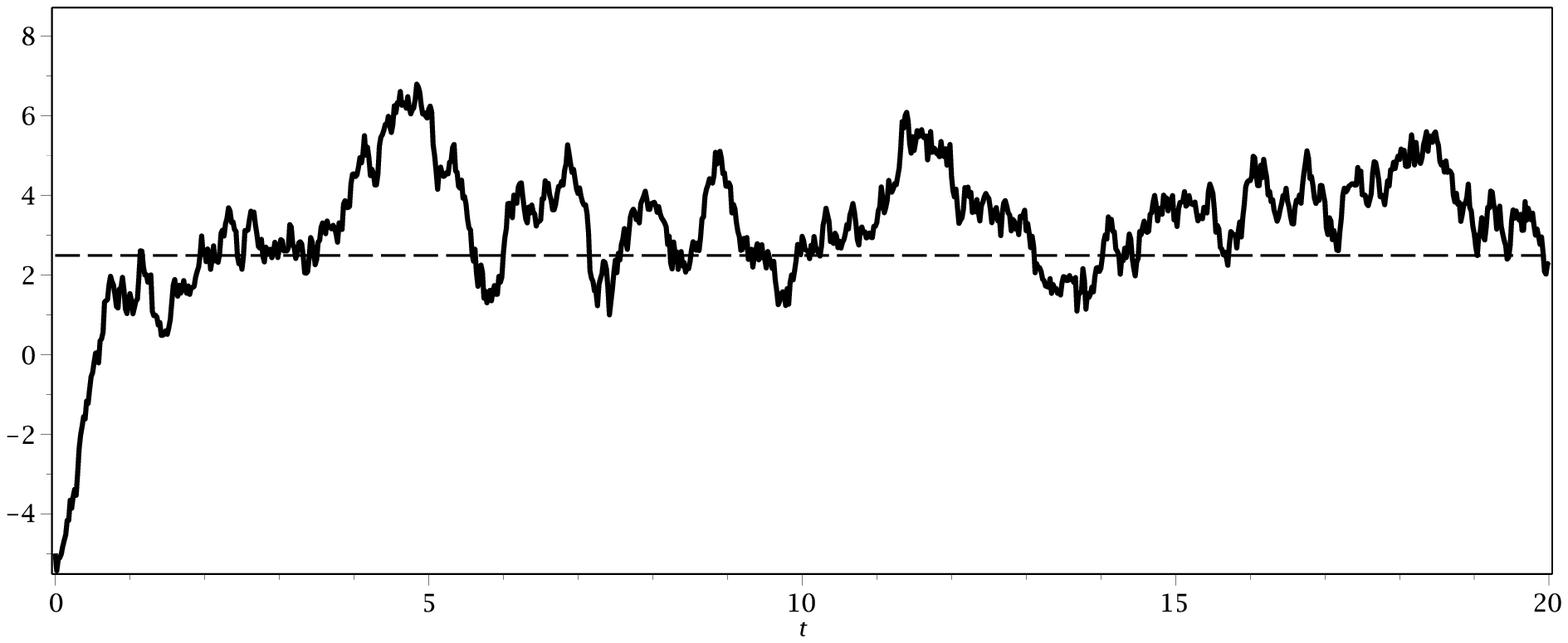}
\includegraphics[scale=0.35, bb = -450 300 -100 400]{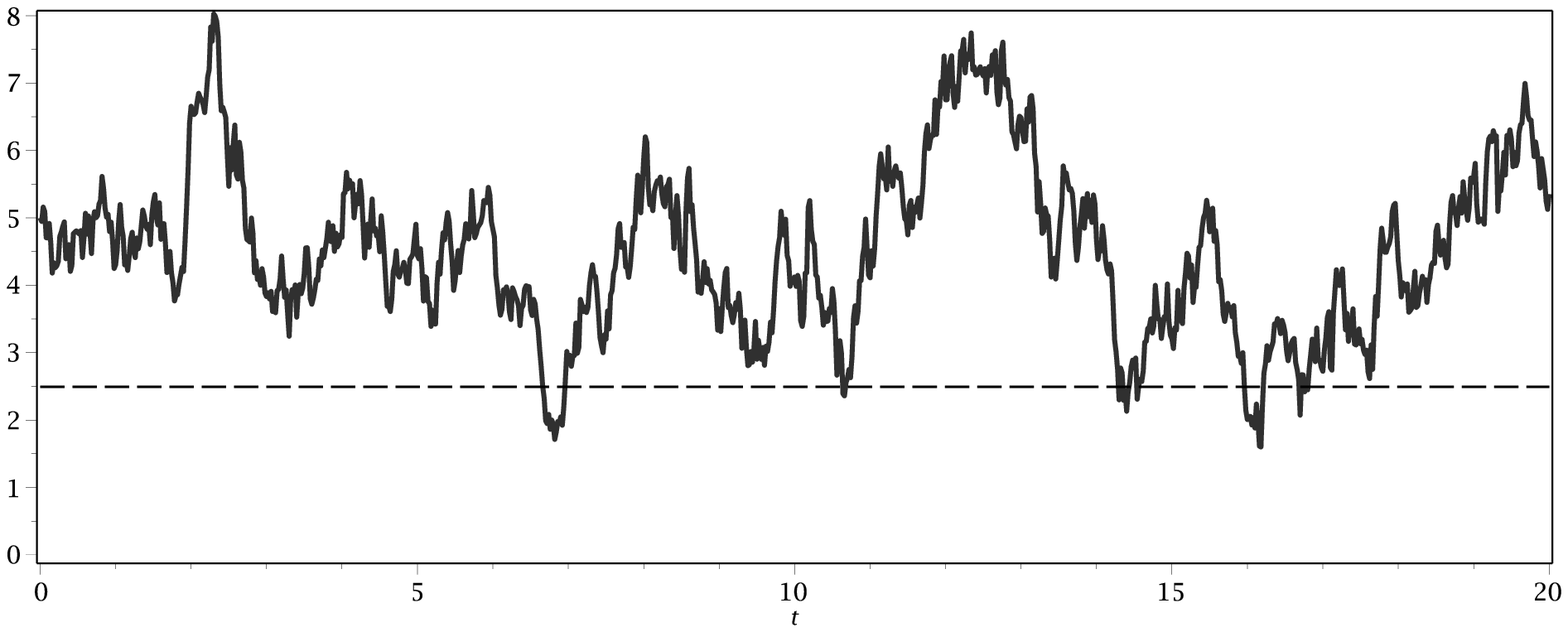}
\caption{Realisations of an OU-process with starting values $r_0=-5$ (the left picture) and $r_0=5$ and the optimal barrier $r^*=2.4936$ (dashed line).\label{real}}
\end{figure}
\begin{figure}[t]
\includegraphics[scale=0.6, bb = -150 50 200 250]{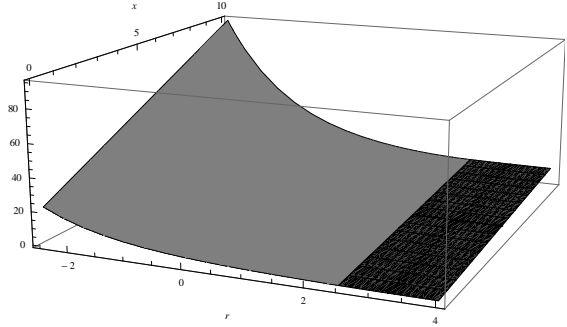}
\caption{The value function $V(r,x)$. The black and gray areas describe the strategies ``maximal consumption'' and ``no consumption'' correspondingly.\label{vf}}
\end{figure}
\begin{Bsp}
Let $a = 1$, $\tilde\sigma = 2$ and $\tilde b = 4$. 
The function $H(r)=\frac{\tilde D_{b/a+1}\big(\frac{b-r}{\sigma}\big)}{\tilde D_{b/a}\big(\frac{b-r}{\sigma}\big)}$ is strictly increasing and attains $\frac{\sigma}b=\frac 1{2\sqrt{2}}$ at $r^* = 2.4936$. 
In the time intervals where the process $\{r_t\}$ attains values smaller than $r^*$ it is optimal to wait, in the intervals where $r_t\ge r^*$ we pay everything. 
In Figure \ref{real} one can see realisations of an Ornstein-Uhlenbeck process (OU-process) with starting values $r_0=5$ and $r_0=-5$. Using the results from Subsection \ref{G} and solving differential equations \eqref{eqnr1} and \eqref{eqnr2} with corresponding boundary conditions, we can calculate the value function $V(r,x)$, illustrated in Figure \ref{vf}. 
\end{Bsp}
\subsection*{Acknowledgements}
\noindent
The research of the author was supported by the Austrian Science Fund, grant P26487.
\\The author would further like to thank Paul Kr\"uhner for helpful discussions.

\end{document}